\newtheorem{theorem}{Theorem}
\newtheorem{corollary}[theorem]{Corollary}
\newtheorem{lemma}[theorem]{Lemma}
\theoremstyle{definition}
\newtheorem{remark}[theorem]{Remark}
\newcommand{\se}[1]{\operatorname{S}_{#1}}
\begin{document}

\makeatletter
\renewcommand{\thesection}{\@arabic\c@section}
\makeatother

\title{On the Bourbaki's fixed point theorem and the axiom of choice}
\author{Mohssin Zarouali-Darkaoui
\\Department of Mathematics, \\
University of Sultan Slimane, Morocco\\
E-mail address: \textsf{mohssin.zarouali@gmail.com}}
%\date{\empty}

\maketitle

\abstract
In this note we generalize the Moroianu's fixed point theorem. We propose a very elegant common proof of the Bourbaki's fixed point theorem and our result. We apply our result to give a very elegant proof of the fact that, in the Zermelo-Fraenkel system, the axiom of choice is equivalent to each of the following statements: H. Kneser's Lemma, Zorn's Lemma, Zermelo's Lemma.

\paragraph{Mathematics Subject Classification:} 06A01, 03E01.

\paragraph{Keywords:} Preordered set, well-ordered set, fixed point, choice function.

\section{Introduction}

In mathematics, the axiom of choice, abbreviated as ``AC'', is an axiom of set theory which affirms the possibility of constructing sets by repeating an infinite number of times an action of choice, even of not explicitly specified. It was formulated for the first time by Ernest Zermelo in 1904 (see \cite{Zermelo:1904}). This axiom can be accepted or rejected, according to the axiomatic theory of sets chosen.

We recall that the Bourbaki's fixed point theorem \cite{Bourbaki:1949} has many important application. The Moroianu's fixed point theorem \cite{Moroianu:1971} is a slight version of the Bourbaki's theorem. In this note, we give a version of Moroianu's fixed point theorem \cite{Moroianu:1971} without assuming the AC (see the first part of Theorem \ref{Bourbaki-Moroianu}). We give a very elegant common proof of our result and the Bourbaki's theorem. We apply our version of the Moroianu's theorem to give a very elegant proof that, in the Zermelo-Fraenkel system, the axiom of choice has several equivalent statements: H. Kneser's Lemma \cite{Kneser:1950}, Zorn's Lemma \cite{Zorn:1935}, Zermelo's Lemma \cite{Zermelo:1904}.

\section{Preliminaries}

Let $E$ be a preordered set. A subset $S$ of $E$ is called \emph{initial segment} (or simply \emph{segment}) of $E$ if $$(x\in S\text{ and }y<x) \Longrightarrow y\in S.$$
For instance, $E$, the empty set, and the intervals $]{\leftarrow},a]$, $\se{a}=\,]{\leftarrow},a[$ are segments. If $S'$ is a segment of $S$ which is a segment of $E$, therefore $S'$ is a segment of $E$. Every union and every intersection of a family of segments is also a segment. Analogously we define \emph{final segment}.

$E$ is said to be a \emph{well-ordered set} if every non-empty subset $A$ of $E$ has a least element. We recall that the segments of a well-ordered set $E$ are: $E$, $\se{a}$ with $a\in E$.

$E$ is said to be \emph{inductive} if every totally ordered subset of it has an upper bound in $E$.

\begin{lemma}\label{réunion_Ens_BO}
Let $E$ be a set, and $\mathcal{F}$ be the set of all subsets of $E$ that can be well-ordered ($\varnothing\in\mathcal{F}$). We define a preorder on $\mathcal{F}$ as follows: $(A,\leq_A)\preceq(B,\leq_B)$ if
\begin{enumerate}[a)]
  \item $A\subset B$;
  \item $\leq_B$ induces $\leq_A$ on $A$;
  \item $A$ is a segment of $B$.
\end{enumerate}
Let $(A_i)_{i\in I}$ be a family of elements of $\mathcal{F}$ totally ordered by inclusion. The well-orderings on the sets $A_i$ can be extended to a well-ordering on $A=\bigcup_{i\in I}A_i$ and $(A_i,\leq_{A_i})\preceq(A,\leq_A)$ for all $i$.
\end{lemma}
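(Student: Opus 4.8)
The plan is to build the desired well-ordering on $A=\bigcup_{i\in I}A_i$ by gluing the given well-orderings $\le_{A_i}$, and then to verify the three clauses a), b), c) defining $\preceq$ directly. First I would record the consequence of the hypothesis that will be used repeatedly: for any $i,j\in I$ we have $(A_i,\le_{A_i})\preceq(A_j,\le_{A_j})$ or $(A_j,\le_{A_j})\preceq(A_i,\le_{A_i})$; in particular the $A_i$ form a chain for inclusion, so every finite subset of $A$ — and in particular every pair $\{x,y\}$ — is contained in a single $A_i$. I would then define $\le_A$ on $A$ by declaring $x\le_A y$ iff $x\le_{A_i}y$ for some (equivalently, for every) index $i$ with $x,y\in A_i$. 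The ``equivalently, for every'' is exactly clause b): if $x,y\in A_i\cap A_j$ and, say, $(A_i,\le_{A_i})\preceq(A_j,\le_{A_j})$, then $\le_{A_j}$ restricts to $\le_{A_i}$ on $A_i$, so the truth value of $x\le_{A_i}y$ does not depend on the chosen index. Hence $\le_A$ is well defined, and no choice is involved, since this is a definition by a first-order formula.

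Next I would check that $\le_A$ is a total order: reflexivity, antisymmetry, transitivity and totality each concern only finitely many elements of $A$, so each instance can be tested inside a common $A_i$, where it holds because $\le_{A_i}$ is a (well-)order. The crucial step is that $\le_A$ is a well-order. Let $B\subseteq A$ be nonempty; pick $x\in B$ and an index $i$ with $x\in A_i$, so $B\cap A_i\ne\varnothing$, and let $m$ be its $\le_{A_i}$-least element. I claim $m$ is the $\le_A$-least element of $B$. Take $y\in B$. If $y\in A_i$ then $m\le_{A_i}y$, hence $m\le_A y$. If $y\notin A_i$, choose $j$ with $y\in A_j$; since $y\notin A_i$ we cannot have $A_j\subseteq A_i$, so by the dichotomy above $(A_i,\le_{A_i})\preceq(A_j,\le_{A_j})$. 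Now $m,x\in A_i\subseteq A_j$ and $y\in A_j$; were $y<_{A_j}m$, then — $A_i$ being a segment of $A_j$ (clause c)) and $m\in A_i$ — we would get $y\in A_i$, a contradiction. Hence $m\le_{A_j}y$, i.e. $m\le_A y$. Thus $(A,\le_A)\in\mathcal F$.

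Finally I would verify $(A_i,\le_{A_i})\preceq(A,\le_A)$ for each $i$. Clause a), $A_i\subseteq A$, is immediate, and clause b) holds by the very definition of $\le_A$. For clause c), suppose $y\in A$, $x\in A_i$ and $y<_A x$; pick $j$ with $y\in A_j$. If $A_j\subseteq A_i$ then $y\in A_i$ and we are done; otherwise $A_i\subsetneq A_j$, so $(A_i,\le_{A_i})\preceq(A_j,\le_{A_j})$, and since $x\in A_i\subseteq A_j$ and $y\in A_j$ we get $y<_{A_j}x$; as $A_i$ is a segment of $A_j$ containing $x$, this forces $y\in A_i$. Hence $A_i$ is a segment of $A$, completing the proof.

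I expect the only genuinely delicate point to be the well-ordering argument, where the segment hypothesis (clause c)) is precisely what prevents a ``new'' element $y\in B\setminus A_i$ from slipping below the candidate minimum $m$; all remaining verifications are routine bookkeeping, performed entirely inside the $A_i$, and make no use of any form of the axiom of choice.
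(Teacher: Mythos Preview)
Your argument is correct and is exactly the standard elaboration of what the paper records only as ``Straightforward.'' One terminological point worth noting: the paper's phrase ``totally ordered by inclusion'' must be read as totally ordered by the relation $\preceq$ (not merely by $\subset$), since otherwise the orderings on the $A_i$ need not be compatible and the lemma would fail; you implicitly read it this way when you record the $\preceq$-dichotomy as your working hypothesis, which is the right call.
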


\begin{proof}
Straightforward.
\end{proof}

The following (\cite[Lemme 3, III.19]{Bourbaki:1970}) is essentially due to A. Tarski (see \cite[Theorem 3]{Tarski:1939}).

\begin{lemma}[Tarski--Bourbaki]\label{lem_ZZC}
Let $E$ be a set, $\mathcal{S}\subset\mathcal{P}(E)$, and $\varphi:\mathcal{S}\to E$ a map such that $\varphi(X)\notin X$ for all $X\in\mathcal{S}$. Therefore there is a unique subset $M$ of $E$ that can be well-ordered satisfying
\begin{enumerate}[1)]
  \item for all $x\in M$: $\se{x}\in\mathcal{S}$ and $\varphi(\se{x})=x$;
  \item $M\notin\mathcal{S}$.
\end{enumerate}
\end{lemma}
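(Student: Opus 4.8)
The plan is to construct $M$ as the union of all ``$\varphi$-consistent'' well-ordered subsets of $E$ and then verify it has the required properties. Call a well-ordered subset $W\subseteq E$ \emph{$\varphi$-admissible} if for every $x\in W$ we have $\se{x}\cap W\in\mathcal{S}$ and $\varphi(\se{x}\cap W)=x$, where $\se{x}\cap W$ denotes the initial segment of $W$ determined by $x$. (Here I must be slightly careful: in condition 1) of the statement, $\se{x}$ should be read as the initial segment of $M$ below $x$; the notation $\se{a}=\,]{\leftarrow},a[$ from the Preliminaries refers to segments inside the ambient well-ordered set under discussion.) First I would prove the key comparability lemma: if $W_1$ and $W_2$ are both $\varphi$-admissible, then one is an initial segment of the other, and the orderings agree on the overlap. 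This is the standard transfinite-induction argument comparing two well-orderings: let $D$ be the union of all common initial segments of $W_1$ and $W_2$ on which the orders agree; show $D$ is itself such a common initial segment; then if $D\neq W_1$ and $D\neq W_2$, both $W_1$ and $W_2$ have a least element above $D$, and since $D=\se{x_1}\cap W_1=\se{x_2}\cap W_2$ we get $x_1=\varphi(D)=x_2$, contradicting maximality of $D$. Hence $D$ equals $W_1$ or $W_2$.

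Next I would let $M=\bigcup\{W : W\text{ is }\varphi\text{-admissible}\}$. By the comparability lemma the family of $\varphi$-admissible sets is totally ordered by the relation $\preceq$ of Lemma \ref{réunion_Ens_BO} (inclusion as initial segment with compatible order), so by that lemma $M$ carries a well-ordering extending all of them, and each $\varphi$-admissible $W$ is an initial segment of $M$. I would then check that $M$ itself is $\varphi$-admissible: given $x\in M$, pick a $\varphi$-admissible $W$ with $x\in W$; since $W$ is an initial segment of $M$, the segment of $M$ below $x$ coincides with the segment of $W$ below $x$, which lies in $\mathcal{S}$ and maps to $x$ under $\varphi$. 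This establishes property 1).

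For property 2), suppose toward a contradiction that $M\in\mathcal{S}$. Then $a:=\varphi(M)\in E$ and $a\notin M$ by the hypothesis on $\varphi$. Form $M':=M\cup\{a\}$, ordered by extending the well-order on $M$ with $a$ as the new top element; this is again a well-ordered set, $M$ is an initial segment of $M'$, and $\se{a}\cap M'=M\in\mathcal{S}$ with $\varphi(M)=a$. For any other $x\in M'$, i.e. $x\in M$, the segment below $x$ is unchanged, so $M'$ is $\varphi$-admissible. But then $M'\subseteq M$ by definition of $M$, forcing $a\in M$ — contradiction. Hence $M\notin\mathcal{S}$. Finally, uniqueness: if $N$ is any well-ordered subset satisfying 1) and 2), then $N$ is $\varphi$-admissible, so $N$ is an initial segment of $M$; if $N\neq M$ then $N=\se{x}\cap M$ for some $x\in M$, but then $N\in\mathcal{S}$ with $\varphi(N)=x$, contradicting $N\notin\mathcal{S}$. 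Thus $N=M$.

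The main obstacle I anticipate is purely bookkeeping rather than conceptual: being scrupulous about what ``$\se{x}$'' means in each context — the segment of $W$, of $M$, or of $M'$ — and confirming these coincide whenever one set is an initial segment of another, since the whole argument hinges on that coherence. The comparability lemma is the technical heart, but it is the familiar Zermelo-style uniqueness-of-well-orderings argument and should go through cleanly once the definition of $\varphi$-admissible is pinned down. Notably, no appeal to AC is needed anywhere: $\varphi$ is given, and every step is an explicit construction or a transfinite induction.
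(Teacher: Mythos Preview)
Your proposal is correct and follows essentially the same route as the paper: define the class of well-ordered subsets satisfying condition~1) (your ``$\varphi$-admissible'' sets are exactly the paper's $\mathcal{F}'$), prove the comparability lemma, take the union via Lemma~\ref{réunion_Ens_BO}, and then verify 1), 2), and uniqueness by the extension-by-one-point argument. The only cosmetic difference is in the comparability step: the paper defines $C=\{x\in A\cap B:\se{x}^A=\se{x}^B\text{ with the same order}\}$ and shows it is a segment of both, whereas you take $D$ to be the union of all common initial segments; these yield the same set and the same contradiction via $\varphi$.
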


\begin{proof}
Let $\mathcal{F}'$ be the set of subsets $A$ of $E$ that can be well-ordered and satisfying 1).

Let $A,B\in\mathcal{F}'$ and $C$ the set of $x\in A\cap B$ such that $\se{x}^A=\se{x}^{B}$ and the induced preorderings on that segment coincide.

$C$ is a segment of $A$:  Let $x\in C$ and $y\in\se{x}^A$. Hence $y\in A\cap B$ and $\se{y}^A=\se{y}^B\subset\se{x}^A$. By symmetry, $C$ is also a segment of $B$. Clearly, the induced preordering on $C$ by those of $A$ and $B$ coincide. Suppose that $C\neq A$ and $C\neq B$. Let $x$ be the least element of $A-C$ in $A$ et $x'$ be the least element of $B-C$ in $B$. We have $V=\se{x}^A$: there is $y\in A$ such that $C=\se{y}^A$. We have also $x\leq_Ay$ since $y\in A-C$. Hence $y=x$. By symmetry, $C=\se{x'}^B$. Hence $C\in\mathcal{S}$ and $\varphi(C)=x=x'\in C$. We obtain a contradiction. Finally, $C=A$ or $C=B$, i.e. $A\subset B$ or $B\subset A$.

By Lemma \ref{réunion_Ens_BO}, $M=\bigcup_{A\in\mathcal{F}'}A$ is a well-ordered set satisfying 1). We have also 2), since if we suppose $M\in\mathcal{S}$, then $\omega=\varphi(M)\notin M$, and the well-ordered set $\overline{M}=M\cup\{\omega\}$ satisfy 1) (since $\se{\omega}=M\in\mathcal{S}$), contradiction.

For the unicity, if $M'$ another well-ordered subset of $E$ satisfying 1) and 2), then $M'$ is a segment of $M$, hence $M'=M$.
\end{proof}

\begin{remark}
If $M\neq\varnothing$ and $x$ is the least element of $M$, therefore $\se{x}=\varnothing\in\mathcal{S}$.
\end{remark}

\section{Main results}

\begin{theorem}[Kanamori \cite{Kanamori:1997}]
Let $E$ be a set, $\psi:\mathcal{P}(E)\to E$ be a map. Therefore there is a unique subset $M$ of $E$ that can be well-ordered satisfying
\begin{enumerate}[1)]
  \item for all $x\in M$: $\psi(\se{x})=x$;
  \item $\psi(M)\in M$.
\end{enumerate}
\end{theorem}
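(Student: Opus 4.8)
The plan is to obtain this as an immediate corollary of the Tarski--Bourbaki Lemma (Lemma \ref{lem_ZZC}). The only gap to bridge is that there the selector $\varphi$ is merely a partial map on some $\mathcal{S}\subseteq\mathcal{P}(E)$ satisfying $\varphi(X)\notin X$, whereas here we are handed a \emph{total} map $\psi\colon\mathcal{P}(E)\to E$ with no constraint at all; the remedy is simply to discard the sets on which $\psi$ misbehaves.

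Concretely, I would set
\[
\mathcal{S}=\{X\subseteq E:\psi(X)\notin X\}\subseteq\mathcal{P}(E),\qquad \varphi=\psi|_{\mathcal{S}}\colon\mathcal{S}\to E.
\]
By construction $\varphi(X)=\psi(X)\notin X$ for every $X\in\mathcal{S}$, so Lemma \ref{lem_ZZC} applies and produces a unique well-orderable $M\subseteq E$ with (1$'$) $\se{x}\in\mathcal{S}$ and $\varphi(\se{x})=x$ for all $x\in M$, and (2$'$) $M\notin\mathcal{S}$. Unwinding: (1$'$) reads $\psi(\se{x})=x$ for all $x\in M$, which is exactly condition 1) of the theorem; conversely that very equation forces $\se{x}\in\mathcal{S}$, since $x\notin\se{x}$ in any preordered set gives $\psi(\se{x})=x\notin\se{x}$, so the clause ``$\se{x}\in\mathcal{S}$'' is automatic and need not be postulated. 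And (2$'$) says precisely $\psi(M)\in M$, which is condition 2). (Incidentally $M\neq\varnothing$, since $\psi(M)\in M$ rules out $M=\varnothing$, in agreement with the Remark following Lemma \ref{lem_ZZC}.)

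For uniqueness, if $M'$ is any well-orderable subset of $E$ satisfying 1) and 2) of the theorem, then reading the translation above backwards shows $M'$ satisfies 1$'$) and 2$'$) with respect to the same $\mathcal{S}$ and $\varphi$, so the uniqueness clause of Lemma \ref{lem_ZZC} gives $M'=M$. I do not anticipate a real obstacle: all the substance sits in Lemma \ref{lem_ZZC}, and the one point requiring care is this bookkeeping remark---that totality of $\psi$ is exactly what lets the condition $\psi(\se{x})=x$ absorb the membership condition $\se{x}\in\mathcal{S}$---so that the hypotheses of the theorem correspond exactly to those of the lemma.
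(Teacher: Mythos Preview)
Your proposal is correct and is exactly the paper's approach: define $\mathcal{S}=\{X\in\mathcal{P}(E)\mid\psi(X)\notin X\}$, set $\varphi=\psi|_{\mathcal{S}}$, and invoke Lemma~\ref{lem_ZZC}. Your additional unwinding of conditions (1$'$), (2$'$) and the explicit uniqueness argument are just spelling out what the paper leaves implicit in the phrase ``it suffices to apply''.
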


\begin{proof}
It suffices to apply Lemma \ref{lem_ZZC} to $\mathcal{S}=\{X\in \mathcal{P}(E)\mid \psi(X)\notin X\}$ and $\varphi=\psi|_\mathcal{S}$. 
\end{proof}

\begin{corollary}
Let $E$ be a set, every map $\psi:\mathcal{P}(E)\to E$ is not injective. 
\end{corollary}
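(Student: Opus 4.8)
The plan is to apply the Kanamori theorem to the given map $\psi\colon\mathcal{P}(E)\to E$ and read off from it a non-trivial coincidence of values. Let $M\subseteq E$ be the (unique) well-orderable subset furnished by that theorem, and set $x_0:=\psi(M)$. By condition 2) of the theorem, $x_0\in M$; in particular $M\neq\varnothing$ and $x_0$ is a genuine element of the well-ordered set $M$, so the initial segment $\se{x_0}=\{\,y\in M\mid y<x_0\,\}$ is a well-defined subset of $E$, hence an admissible argument for $\psi$.

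Next I would invoke condition 1) of the theorem at the point $x=x_0\in M$, which gives $\psi(\se{x_0})=x_0$. Combining this with the definition of $x_0$ yields $\psi(\se{x_0})=x_0=\psi(M)$, so $\psi$ takes the same value at the two sets $\se{x_0}$ and $M$.

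It then remains only to check that these two sets are distinct, which is immediate: $x_0\in M$, whereas $x_0\notin\se{x_0}$ by the very definition of the open initial segment $]{\leftarrow},x_0[$. Hence $\se{x_0}\neq M$ while $\psi(\se{x_0})=\psi(M)$, so $\psi$ is not injective.

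There is no real obstacle here: the statement is essentially a one-line consequence of the Kanamori theorem, and the only point needing (trivial) care is the verification that $\se{x_0}\neq M$, for which $x_0$ itself is the witness. Equivalently, one could bypass the theorem and argue directly from Lemma \ref{lem_ZZC} applied to $\mathcal{S}=\{X\in\mathcal{P}(E)\mid \psi(X)\notin X\}$ and $\varphi=\psi|_{\mathcal{S}}$, but routing through the theorem as stated is the cleanest option.
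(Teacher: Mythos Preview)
Your proof is correct and follows exactly the paper's approach: apply the Kanamori theorem, set $x_0=\psi(M)\in M$, use condition 1) to get $\psi(\se{x_0})=x_0=\psi(M)$, and observe $\se{x_0}\neq M$ since $x_0\in M\setminus\se{x_0}$. The paper's proof is the same one-liner (attributed to Kanamori), just with less detail spelled out.
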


\begin{proof}
By the last theorem, $\psi(\se{\psi(M)})=\psi(M)$ and $\se{\psi(M)}\neq M$. (This proof is due to Kanamori \cite{Kanamori:1997}).
\end{proof}

Let $X$ be a non-empty set. A map $c$ defined on $X-\{\varnothing\}$ is called \emph{choice function} for $X$ if $c(x)\in x$ for all $x\in X-\{\varnothing\}$.

\begin{theorem}[Zermelo \cite{Zermelo:1904}]
If the set $\mathcal{P}(E)$ has a choice function, $E$ can be endowed with a well-ordering.
\end{theorem}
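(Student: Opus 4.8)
The plan is to derive Zermelo's theorem directly from the Tarski--Bourbaki lemma (Lemma \ref{lem_ZZC}), the whole point being to choose the triple $(E,\mathcal{S},\varphi)$ so that the two conclusions of that lemma say precisely ``$E$ carries a well-ordering built by iterated choice''. The only real design decision is to let $\mathcal{S}$ be the family of \emph{proper} subsets of $E$ and to feed the choice function the \emph{complement} of a set rather than the set itself.

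Concretely, I would proceed as follows. Let $c$ be a choice function for $\mathcal{P}(E)$, so $c$ is defined on $\mathcal{P}(E)-\{\varnothing\}$ and $c(A)\in A$ for every non-empty $A\subseteq E$. Put $\mathcal{S}=\mathcal{P}(E)-\{E\}$ and define $\varphi:\mathcal{S}\to E$ by $\varphi(X)=c(E-X)$. This is well defined: if $X\in\mathcal{S}$ then $X\neq E$, hence $E-X\neq\varnothing$ and $c(E-X)$ makes sense; moreover $\varphi(X)=c(E-X)\in E-X$, so $\varphi(X)\notin X$. Thus $(\mathcal{S},\varphi)$ satisfies the hypothesis of Lemma \ref{lem_ZZC}.

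Applying Lemma \ref{lem_ZZC}, we obtain a subset $M$ of $E$ that can be well-ordered, with $\se{x}\in\mathcal{S}$ and $\varphi(\se{x})=x$ for every $x\in M$, and with $M\notin\mathcal{S}$. Since $M\subseteq E$ and $M\notin\mathcal{S}=\mathcal{P}(E)-\{E\}$, necessarily $M=E$. Therefore $E$ itself can be endowed with a well-ordering, namely the well-ordering of $M$ provided by the lemma, which in addition satisfies $x=c\bigl(E-\se{x}\bigr)$ for all $x\in E$ --- i.e.\ it is exactly the well-ordering obtained by repeatedly using $c$ to pick an element of the complement of the portion already constructed, which is Zermelo's original idea.

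I do not expect a genuine obstacle here: once the lemma from the Preliminaries is available, the proof is a one-line instantiation. The subtle point, if any, is purely in the set-up --- one must take $\mathcal{S}$ to consist of \emph{all} proper subsets so that clause 2) of Lemma \ref{lem_ZZC} reads ``$M=E$'', and one must define $\varphi$ through the complement so that the requirement $\varphi(X)\notin X$ holds automatically. With those two choices made, there is nothing left to prove.
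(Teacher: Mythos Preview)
Your proof is correct and is exactly the paper's own argument: apply Lemma~\ref{lem_ZZC} with $\mathcal{S}=\mathcal{P}(E)-\{E\}$ and $\varphi(X)=c(E-X)$, then read $M\notin\mathcal{S}$ as $M=E$. The paper states this in a single sentence; you have simply spelled out the verifications.
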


\begin{proof}
It suffices to apply Lemma \ref{lem_ZZC} to $\mathcal{S}=\mathcal{P}(E)-\{E\}$ and $\varphi:\mathcal{S}\to E$ defined by $\varphi(X)=c(E-X)$.
\end{proof}

\begin{theorem}\label{Bourbaki-Moroianu}
Let $E$ be a non-empty preordered set, such that at least one of the following holds:
\begin{enumerate}[1)]
  \item The set $\mathcal{P}(E)$ of all subsets of $E$ has a choice function, and every well-ordered subset of $E$ has an upper bound in $E$.
  \item every well-ordered subset of $E$ has a least upper bound in $E$ (Bourbaki).
\end{enumerate}
If $f:E\to E$ is a map such that $f(x)\geq x$ for all $x\in E$, therefore it has a fixed point.
\end{theorem}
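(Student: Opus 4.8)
The plan is to unify both cases by invoking the Tarski--Bourbaki lemma (Lemma \ref{lem_ZZC}) with a cleverly chosen pair $(\mathcal{S},\varphi)$, so that the well-ordered set $M$ it produces becomes a well-ordered chain on which $f$ is strictly increasing up to its supremum, forcing the supremum to be a fixed point. Concretely, I would first handle case 2) directly and then deduce case 1) by reduction.

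\textbf{Case 2).} Suppose every well-ordered subset of $E$ has a least upper bound. Define $\mathcal{S}=\{X\in\mathcal{P}(E)\mid X \text{ is well-ordered by the induced order and } f(\sup X)\neq\sup X\}$ together with $\varphi:\mathcal{S}\to E$, $\varphi(X)=f(\sup X)$. The key point is that $\varphi(X)\notin X$: since $f(\sup X)\geq\sup X$ and $f(\sup X)\neq\sup X$ we get $f(\sup X)>\sup X$, hence $f(\sup X)$ is strictly above every element of $X$ and cannot belong to $X$. (When $X=\varnothing$, one uses $\sup\varnothing=$ the least element of $E$ if it exists; more robustly, I would instead index by a fixed basepoint $a\in E$ and work with the poset of well-ordered subsets containing $a$, or simply note that $\varnothing\in\mathcal{S}$ forces $M\neq\varnothing$ and handle $\sup\varnothing$ as a convention, picking some $a\in E$ with $\sup\varnothing:=a$.) Now Lemma \ref{lem_ZZC} gives a well-ordered $M\subset E$ with: for every $x\in M$, $\se{x}\in\mathcal{S}$ and $f(\sup\se{x})=x$; and $M\notin\mathcal{S}$. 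From $M\notin\mathcal{S}$ and the fact that $M$ is well-ordered, the defect must be that $f(\sup M)=\sup M$, i.e. $\sup M$ is a fixed point of $f$ — unless one first has to rule out that $M$ fails the well-ordering clause, which it does not since Lemma \ref{lem_ZZC} already asserts $M$ can be well-ordered. So the proof closes.

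\textbf{Case 1).} Now assume $\mathcal{P}(E)$ has a choice function $c$ and every well-ordered subset has an upper bound (not necessarily least). The idea is the same, but without least upper bounds we cannot canonically define $\varphi(X)$; instead we use $c$ to \emph{choose} a strict upper bound when one exists. For a well-ordered $X\subset E$, let $U(X)=\{y\in E\mid y>z \text{ for all } z\in X \text{ and } f(y)\neq y\}$ be the set of strict upper bounds of $X$ that are not fixed points of $f$; put $\mathcal{S}=\{X\text{ well-ordered}\mid U(X)\neq\varnothing\}$ and $\varphi(X)=c(U(X))\in U(X)$, so again $\varphi(X)\notin X$. Apply Lemma \ref{lem_ZZC}: we get well-ordered $M$ with $\se{x}\in\mathcal{S}$, $\varphi(\se{x})=x$ for all $x\in M$, and $U(M)=\varnothing$. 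Since every well-ordered subset of $E$ has an upper bound, pick an upper bound $b$ of $M$. If $f$ had no fixed point, then $f(b)>b$ would be a strict upper bound of $M$ which is not a fixed point, contradicting $U(M)=\varnothing$; hence $f$ has a fixed point.

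\textbf{Main obstacle.} I expect the only real subtleties to be bookkeeping rather than conceptual: (i) making sure $\varnothing$ lies in $\mathcal{F}'$/$\mathcal{S}$ or is otherwise handled so that Lemma \ref{lem_ZZC} applies with $E$ nonempty (fixing a basepoint resolves this); (ii) checking that the ``well-ordered subset'' appearing in the hypotheses is exactly the $M$ produced by Lemma \ref{lem_ZZC} — this is immediate since the lemma asserts $M$ is well-orderable, and the induced order on $M$ from $E$ must be shown to be that well-order (which follows because $\varphi(\se{x})=x$ is a strict upper bound of $\se{x}$, so the enumeration is order-compatible). The genuinely delicate verification is that $\varphi(\se{x})=x$ together with strict monotonicity forces the $E$-order on $M$ to coincide with its well-order, so that $\sup M$ (case 2) or an upper bound of $M$ (case 1) is the relevant object; I would spell this out carefully. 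Everything else is a direct application of the already-proved Lemma \ref{lem_ZZC}.
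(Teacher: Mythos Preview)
Your proposal is correct and follows essentially the same route as the paper: apply the Tarski--Bourbaki Lemma~\ref{lem_ZZC} to a suitably chosen pair $(\mathcal{S},\varphi)$, verify that the resulting well-order on $M$ coincides with the order induced from $E$ (because each $x=\varphi(\se{x})$ is a strict $E$-upper bound of $\se{x}$), and read off a fixed point from $M\notin\mathcal{S}$. The only differences are cosmetic: in Case~2) the paper fixes a basepoint $a$ and uses the slightly more elaborate rule $\varphi(X)=\sup X$ when $\sup X\notin X$ and $\varphi(X)=f(\sup X)$ when $\sup X\in X$ (which additionally forces the fixed point to lie in $M$, a conclusion not needed for the statement), while for Case~1) the paper merely says ``analogously'' where you spell out an explicit choice-function construction.
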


\begin{proof}
We prove the cas 2, and the case 1 can be shown analogously. Let $a\in E$ and $\mathcal{S}$ be the set consisting of the empty-set and the subsets $X$ of $E$ such that $a\in X$, and $X$ has a a least upper bound $m$ in $E$ satisfying $m\not\in X\text{ or }f(m)>m$. Let $\varphi:\mathcal{S}\to E$ be the map defined by $\varphi(\varnothing)=a$ et $$\varphi(X)=\begin{cases}
m & \text{if}\quad m=\sup_EX\notin X\\
f(m) & \text{if}\quad m=\sup_EX\in X.
\end{cases}$$
If $m=\sup_EX\in X$, therefore $f(m)>m$, then $f(m)\not\in X$. By Lemma \ref{lem_ZZC}, there is a subset $M$ of $E$ endowed with a well-ordering such that 1), 2) hold. The canonical injection $j:(M,\leq)\to (E,\leq)$ is strictly increasing: If $x\in\se{y}^M$, then $y=\varphi(\se{y}^M)$ is an upper bound of $\se{y}^M$ in $E$, hence $x<y$. We have $j:M\to M$ is a preordering isomorphism, i.e. the well-ordering on $M$ is the induced preordering on $M$ by that of $E$. We have also $M\neq\varnothing$ since $M\notin\mathcal{S}$, and $a\in M$, since $a\notin M$ imply there exists $x\in M-\{a\}$, which imply $a\in\se{x}^M=\{y\in M\mid y<x\}$ ($\varphi(\se{x}^M)=x\neq a=\varphi(\varnothing)$). Finally, $M$ has a least upper bound $b$ such that $b\in M$ and $f(b)=b$.
\end{proof}

One of the interesting consequences of the Bourbaki's theorem is the following:

\begin{corollary}[Kuratowski's fixed point theorem \cite{Kuratowski:1922}]
Let $X$ be a non-empty set, and $E\subset\mathcal{P}(X)$ such that $A\subset E\Rightarrow \bigcup A\in E$. Every map $f:E\to E$, satisfying $x\subset f(x)$ for all $x\in E$, has a fixed point.
\end{corollary}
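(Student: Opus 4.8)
The plan is to equip $E$ with the partial order given by inclusion and to check that, under this order, $E$ satisfies hypothesis 2) of Theorem \ref{Bourbaki-Moroianu}; the conclusion is then immediate. First I would observe that $E$ is non-empty: applying the hypothesis $A\subset E\Rightarrow\bigcup A\in E$ to $A=\varnothing$ yields $\varnothing=\bigcup\varnothing\in E$. Next, ordering $E$ by $\subset$, the condition ``$x\subset f(x)$ for all $x\in E$'' is exactly ``$f(x)\geq x$ for all $x\in E$'' in this order, so $f$ is of the type required by Theorem \ref{Bourbaki-Moroianu}.

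The key step is to verify that every well-ordered subset of $E$ — in fact every subset $A$ of $E$ — has a least upper bound in $E$, namely $\bigcup A$. By the hypothesis on $E$ we have $\bigcup A\in E$; since $\bigcup A$ contains each member of $A$, it is an upper bound of $A$ in $(E,\subset)$; and if $U\in E$ is any upper bound of $A$, then $x\subset U$ for every $x\in A$, whence $\bigcup A\subset U$. Thus $\bigcup A$ is the least upper bound of $A$ in $E$, and in particular hypothesis 2) of Theorem \ref{Bourbaki-Moroianu} holds.

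Finally I would invoke Theorem \ref{Bourbaki-Moroianu} (case 2): since $E$ is a non-empty preordered set in which every well-ordered subset has a least upper bound, and $f:E\to E$ satisfies $f(x)\geq x$ for all $x$, the map $f$ has a fixed point $x$, i.e. $f(x)=x$. There is no real obstacle here; the only points requiring any attention are the remark that the empty union forces $\varnothing\in E$ (so that $E\neq\varnothing$) and the elementary fact that unions realize least upper bounds for the inclusion order.
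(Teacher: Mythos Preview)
Your argument is correct and is precisely the intended one: the paper presents this result as an immediate corollary of case~2) of Theorem~\ref{Bourbaki-Moroianu} (Bourbaki), and your verification that $(E,\subset)$ is non-empty with $\bigcup A$ serving as the least upper bound of any $A\subset E$ is exactly what is needed.
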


\begin{theorem}\label{CKZZ}
In the Zermelo-Fraenkel system, the following are equivalent:
\begin{enumerate}[1)]
  \item The axiom of choice: there is a choice function for $\mathcal{P}(E)$ for all non-empty set $E$ (Zermelo).
  \item If $E$ is a non-empty preordered set such that every well-ordered subset of $E$ has an upper bound in $E$, therefore $E$ a maximal element (H. Kneser).
   \item If $E$ is an inductive preordered set, therefore $E$ a un maximal element (Zorn).
  \item Every set can be endowed with a well-ordering (Zermelo).
\end{enumerate}
\end{theorem}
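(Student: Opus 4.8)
The plan is to prove the chain of implications $1)\Rightarrow 4)\Rightarrow 3)\Rightarrow 2)\Rightarrow 1)$, using Theorem~\ref{Bourbaki-Moroianu} as the main engine where possible. The implication $1)\Rightarrow 4)$ is immediate from the Zermelo theorem already proved above: if every power set has a choice function, then in particular $\mathcal{P}(E)$ does, so $E$ can be well-ordered. For $4)\Rightarrow 3)$, suppose every set can be well-ordered and let $E$ be inductive; I would argue by contradiction. If $E$ has no maximal element, then for every $x\in E$ the set $\{y\in E\mid y>x\}$ is non-empty, so using a well-ordering of $E$ (or rather of a suitable subset) one can pick, for each $x$, a strictly larger element $f(x)$; more cleanly, fix a well-ordering $\sqsubseteq$ of $E$ and define $f(x)$ to be the $\sqsubseteq$-least element of $\{y\in E\mid y>x\}$. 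Then $f:E\to E$ satisfies $f(x)>x\geq x$ for all $x$, and since $E$ is inductive every totally ordered --- in particular every well-ordered --- subset has an upper bound, so the hypothesis $1)$ of Theorem~\ref{Bourbaki-Moroianu} would be satisfied once we have a choice function; but in fact we can invoke case $1)$ of that theorem directly because a well-ordering of $\mathcal{P}(E)$ yields a choice function (pick least elements). Theorem~\ref{Bourbaki-Moroianu} then gives a fixed point $b=f(b)>b$, a contradiction. Hence $E$ has a maximal element.

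For $3)\Rightarrow 2)$, this is nearly a tautology: if $E$ is a non-empty preordered set in which every well-ordered subset has an upper bound, then a fortiori every totally ordered subset has an upper bound (a totally ordered subset need not be well-ordered, so this direction is the one requiring thought) --- here I would instead deduce it the other way, noting that Kneser's hypothesis is \emph{weaker} than inductivity, so $3)\Rightarrow 2)$ is not the trivial direction. The honest route is: assume $3)$ holds and let $E$ satisfy Kneser's hypothesis. I would pass to the poset $\mathcal{F}$ of well-orderable subsets of $E$ ordered by $\preceq$ as in Lemma~\ref{réunion_Ens_BO}; by that lemma $\mathcal{F}$ is inductive, so by $3)$ it has a maximal element, and one checks this maximal well-orderable subset, together with its upper bound in $E$, produces a maximal element of $E$. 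Alternatively, and more in the spirit of the paper, $2)$ follows \emph{directly} from Theorem~\ref{Bourbaki-Moroianu} exactly as in the $4)\Rightarrow 3)$ argument, so one may simply prove $4)\Rightarrow 2)$ and $3)$ separately, or observe $2)$ and $3)$ both follow from $4)$ via the theorem; I would streamline the presentation by proving $1)\Rightarrow 4)\Rightarrow 2)\Rightarrow 1)$ and $4)\Rightarrow 3)\Rightarrow 1)$, since $3)\Rightarrow 1)$ is the classical deduction of AC from Zorn.

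The real content is the closing implications $2)\Rightarrow 1)$ (and likewise $3)\Rightarrow 1)$), deriving the axiom of choice. Given a non-empty set $E$, I must produce a choice function for $\mathcal{P}(E)$, i.e. on $\mathcal{P}(E)\setminus\{\varnothing\}$. The standard device: let $\mathcal{C}$ be the set of all pairs $(A,c_A)$ where $A\subseteq \mathcal{P}(E)\setminus\{\varnothing\}$ and $c_A$ is a choice function on $A$, ordered by extension. This ordered set is non-empty (take $A=\varnothing$, or a singleton) and satisfies Kneser's hypothesis: given a well-ordered (indeed any totally ordered) family of partial choice functions, their union is again a partial choice function and is an upper bound. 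By $2)$, $\mathcal{C}$ has a maximal element $(A_0,c_0)$; if $A_0\neq \mathcal{P}(E)\setminus\{\varnothing\}$, pick $x\in (\mathcal{P}(E)\setminus\{\varnothing\})\setminus A_0$ --- here one uses that $x$ itself is a non-empty subset of $E$, so $x$ has an element $e\in x$, chosen by hand for this single $x$ --- and extend $c_0$ by $c_0(x)=e$, contradicting maximality. Hence $A_0=\mathcal{P}(E)\setminus\{\varnothing\}$ and $c_0$ is the desired choice function. The one subtlety, and the step I expect to need the most care, is checking that the ordered set $\mathcal{C}$ satisfies precisely the hypothesis of $2)$ (upper bounds for \emph{well-ordered} subsets suffice, which is weaker and hence easier than for totally ordered ones) and that the union of a chain of partial choice functions is well-defined as a function --- this requires the chain to be directed by extension, which holds because it is totally ordered. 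The deduction $3)\Rightarrow 1)$ is identical with "well-ordered" replaced by "totally ordered" throughout.
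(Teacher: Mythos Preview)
Your eventual route is correct, but it is organized in the opposite direction from the paper, and your detour through $3)\Rightarrow 2)$ contains a genuine gap before you abandon it. The paper runs the cycle as $1)\Rightarrow 2)\Rightarrow 3)\Rightarrow 4)\Rightarrow 1)$. The point of this orientation is that $2)\Rightarrow 3)$ is the trivial direction (Kneser's hypothesis is weaker than inductivity, so Kneser's conclusion is the stronger statement), and $1)\Rightarrow 2)$ becomes a one-line showcase of Theorem~\ref{Bourbaki-Moroianu}: use the choice function directly to define $f(x)>x$ for non-maximal $x$ and $f(x)=x$ otherwise, then case~1) of that theorem hands you a fixed point, which is maximal. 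The cycle then closes with $3)\Rightarrow 4)$ via the poset $\mathcal{F}$ of Lemma~\ref{réunion_Ens_BO} (a maximal well-orderable subset must be all of $E$) and $4)\Rightarrow 1)$ by taking least elements. No partial-choice-function poset is needed anywhere.

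Your attempted $3)\Rightarrow 2)$ via Lemma~\ref{réunion_Ens_BO} does not work as written: the elements of $\mathcal{F}$ carry \emph{abstract} well-orderings, not the order induced from $E$, so a $\preceq$-maximal $(M,\leq_M)$ is simply $E$ equipped with some well-order unrelated to the given preorder, and Kneser's upper-bound hypothesis (which concerns subsets well-ordered in the \emph{induced} order) gives you nothing about it. Your ``one checks'' step would require a different poset (well-ordered-in-the-induced-order subsets, under end-extension), which you do not set up. You are right to abandon this and fall back on $4)\Rightarrow 2)$, which is essentially the paper's $1)\Rightarrow 2)$ argument preceded by the observation that $4)$ yields a choice function. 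Your $2)\Rightarrow 1)$ and $3)\Rightarrow 1)$ via the poset of partial choice functions are correct and standard, but the paper avoids them entirely by going $3)\Rightarrow 4)\Rightarrow 1)$ instead; this keeps Lemma~\ref{réunion_Ens_BO} in play for exactly the step where it belongs and makes the whole proof shorter.
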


\begin{proof}
$1)\Rightarrow 2)$. By the axiom of choice, there is a map $f:E\to E$ such that $f(x)=x$ if $x$ is a maximal element, and $f(x)>x$ if not. We conclude by application the case 1 of Theorem~\ref{Bourbaki-Moroianu}.

$2)\Rightarrow 3)$ is trivial.

$3)\Rightarrow 4)$. Let $E$ be a set and $(\mathcal{F},\preceq)$ the inductive preordered set defined in Lemma \ref{réunion_Ens_BO}. Then it has a maximal element $M$. Necessarily $M=E$ (If $M\neq E$, then $\overline{M}=M\cup\{\omega\}\in\mathcal{F}$ and $M\prec\overline{M}$  where $\omega\in E-M$).

$4)\Rightarrow 1)$. Let $E$ be a non-empty set. $E$ can be well-ordered. It suffices to consider the map $$c:\mathcal{P}(E)-\{\varnothing\}\to E,\quad A\mapsto \text{the least element of } A.$$
\end{proof}


\begin{thebibliography}{99}
\bibitem{Bourbaki:1949} N. Bourbaki, \emph{Sur le th\'{e}or\`{e}me de Zorn}, Archiv der Mathematik, \textbf{2}(\textbf{6}) (1949), pp. 434--437.
\bibitem{Bourbaki:1970} N. Bourbaki, \emph{Th\'{e}orie des ensembles}, Hermann, 1970, Springer, 2006.
\bibitem{Kanamori:1997} A. Kanamori, \emph{The Mathematical import of Zermelo's well-ordering theorem}, The Bulletin of Symbolic Logic, vol. \textbf{3}, no. \textbf{3} (1997), pp. 281--311.
\bibitem{Kneser:1950} H. Kneser, \emph{Eine direkte Ableitung des Zornschen Lemmas aus dem Auswahlaxiom}, Math. Zeitschr. \textbf{53} (1950), pp. 110--113.
\bibitem{Kuratowski:1922} K. Kuratowski, \emph{Une m\'{e}thode d'\'{e}limination des nombres transfinis des raisonnements math\'{e}matiques}, Fundamenta Mathematicae, vol. \textbf{3} (1922), pp. 76--108.
\bibitem{Moroianu:1971} M. Moroianu, \emph{On a theorem of Bourbaki}, Rend. Circ. Math. Palermo \textbf{20} (1971), pp. 139--142.
\bibitem{Tarski:1939} A. Tarski, \emph{On well-ordered subsets of any set}, Fundamenta Mathematicae, vol. \textbf{32} (1939), pp. 176--183.
\bibitem{von Heijenoort:1967} J. von Heijenoort (editor), \emph{From Frege to G\"{o}del, A sourse book in mathematical logic, 1879--1931}, Harvard University Press, Cambridge, 1967.
\bibitem{Witt:1951} E. Witt, \emph{Beweisstudien zum Satz von M. Zorn}, Mathematische Nachrichten, \textbf{4} (1951), pp. 434--438.
\bibitem{Zermelo:1904} E. Zermelo, \emph{Beweis, dass jede Menge wohlgeordnet werden kann (Aus einem an Herrn Hilbert gerichteten Brirfe)}, Mathematische Annalen, vol. \textbf{59}, 1904, pp. 514--516, translated in \cite{von Heijenoort:1967}, pp. 139--141.
\bibitem{Zorn:1935} M. Zorn, \emph{A remark on method in transfinite Algebra}, Bull. Amerc. Math. Soc, \textbf{41} (1935), pp. 667--670.
\end{thebibliography}
\end{document}